\documentclass[12pt,letterpaper]{article}
\usepackage{graphicx}
\usepackage{amsmath}
\usepackage{amsfonts}
\usepackage{amsthm}
\usepackage{amssymb}
\usepackage{color}

\usepackage{verbatim} 
\usepackage{hyperref}

\newtheorem{prop}{Proposition}
\newtheorem{theorem}[prop]{Theorem}

\theoremstyle{definition}
\newtheorem{exercise}{Exercise}

\newcommand{\be}{\begin{equation}}
\newcommand{\ee}{\end{equation}}
\newcommand{\bes}{\begin{equation*}}
\newcommand{\ees}{\end{equation*}}
\newcommand{\bea}{\begin{eqnarray}}
\newcommand{\eea}{\end{eqnarray}}
\newcommand{\beas}{\begin{eqnarray*}}
\newcommand{\eeas}{\end{eqnarray*}}

\begin{document}

\title{Attacking ApSimon's Mints}

\author{Tanya Khovanova\\MIT}

\maketitle

\begin{abstract}
ApSimon's Mints problem is a very difficult and often misunderstood counterfeit-coin puzzle. I explain the problem and suggest ways to approach it, while giving several fun exercises for the reader.
\end{abstract}

Hugh ApSimon described the following coin puzzle in his book \textit{Mathematical Byways in Ayling, Beeling and Ceiling} \cite{A}.

\begin{quote}
New coins are being minted at $n$ independent mints. There is a suspicion that some mints might use a variant material for the coins. There can only be one variant material: fake coins weigh the same independently of the mint. The weight of genuine coins is known, but the weight of fake coins is not. There is a machine that can precisely weigh any number of coins, but the machine can only be used twice. You can request several coins from each mint and then perform the two weighings so that you can deduce with certainty which mints produce fake coins and which mints produce real coins. What is the minimum total of coins you need to request from the mints?
\end{quote}

I will follow ApSimon's notation. Suppose $P_r$ and $Q_r$ are the numbers of coins from the mint $r$ used in the first and the second weighing correspondingly, where $1 \leq r \leq n$. The goal is to provide the list of mints producing fake coins while minimizing $\Sigma_r \max(P_r,Q_r)$.  Let us denote by $W$ the weight of the genuine coin and by $W(1 + \epsilon)$ the weight of the fake coin. We do not know $\epsilon$, except that it is not zero.

Let $d_r$ be either 0 or 1, depending on what material the $r$-th mint uses. Thus, the coin from the $r$-th mint weighs $W(1 + d_r\epsilon)$. We know the results of these two weighings and the weight of the genuine coin. Therefore, we can calculate the following two values: $a = \Sigma_r P_r d_r \epsilon$ and $b = \Sigma_r Q_r d_r \epsilon$.

It is clear that we need to request at least one coin from each mint and use it in at least one weighing: $P_r$ + $Q_r > 0$. If both sums $a$ and $b$ are zero, then all the mints are producing genuine coins. Neither of the two values $a$ or $b$ gives us much information as we do not know $\epsilon$. We can get rid of $\epsilon$ by dividing $a$ by $b$.

There are $2^n - 1$ combinations of possible answers: these are subsets of the set of mints producing fake coins given that there is at least one. Thus we need to select numbers $P_r$ and $Q_r$, so that $a/b$ produces $2^n - 1$ possible answers for different sets of values of $d_r$.

Let us consider cases in which the total number of mints is small. If there is one mint we can take one coin and we will not even need a second weighing. For two mints we need one coin from each mint for a total of 2. 

\begin{exercise}\label{ex:3mints}
For three mints, one coin from each mint is not enough.
\end{exercise} 

It is possible to test three mints with four coins: one each from the first and second mints and two from the third mint. The coins from each mint for the first and second weighings are (0,1,2) and (1,1,0) respectively. 

To prove that this works we need to calculate $(d_2 + 2d_3)/(d_1 + d_2)$ for seven different combinations of $d_r$ and check that they are all different. 

\begin{exercise}
Check that the above weighings provide a solution.
\end{exercise} 

This puzzle seems to be very difficult \cite{GN}. We only know the answer if the number of mints is not more than seven. The corresponding sequence A007673 in the OEIS \cite{OEIS} is: 1, 2, 4, 8, 15, 38, 74. It is possible to give bounds for this sequence, but they are far, far apart. The lower bound is $n$. And the ApSimon's book offers a construction for two weighings where $P_r = r!$ and $Q_r = 1$, giving the upper bound of $\Sigma_r r!$. 

I would like to suggest two variations of the problem:

\begin{enumerate}
\item Minimize the sum of coins used in both weighings: $\Sigma_r P_r + \Sigma_r Q_r$.
\item Minimize the maximum of coins taken from a mint: $\max_r(P_r,Q_r)$.
\end{enumerate}

My next suggestion would be to formulate the ApSimon's Mints problem in terms of linear algebra. Consider $V_r = (P_r,Q_r)$ as a non-zero vector in a 2-dimensional plane. Suppose $K$ is a subset of the vectors. Denote $S_K$ the sum of the vectors in $K$. 

\begin{theorem}\label{thm:la}
The set of non-zero vectors $V_r$ with non-negative coefficients provides a solution to the ApSimon’s Mints problem if and only if for any two distinct subsets $K$ and $J$, the sums $S_K$ and $S_J$ are not collinear.
\end{theorem}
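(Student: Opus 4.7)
The plan is to translate the success of the two weighings into a geometric condition on the subset sums $S_D$. If $D = \{r : d_r = 1\}$ is the unknown set of fake mints, then the observable pair is
\[
(a,b) = \Bigl(\sum_r P_r d_r\epsilon,\; \sum_r Q_r d_r\epsilon\Bigr) = \epsilon\, S_D,
\]
so the experimenter sees a point of $\R^2$ and must recover $D$ from it. Because each $V_r$ is non-zero with non-negative entries, $S_D = 0$ if and only if $D = \emptyset$, so the empty subset is always distinguishable: it alone produces the observation $(0,0)$.

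For non-empty $D$, the set of possible observations is the punctured line through the origin spanned by $S_D$, as $\epsilon$ ranges over all non-zero reals. Thus two distinct non-empty subsets $K$ and $J$ can yield the same observation for some choice of $\epsilon$ and $\epsilon'$ precisely when these two punctured lines coincide, i.e.\ when $S_K$ and $S_J$ are collinear. Both directions of the iff follow immediately from this: if the weighings solve the problem then no two distinct non-empty subset sums can be collinear, for otherwise a collision of observations is possible; conversely, if no two subset sums are collinear, then the observed point $(a,b)$ lies on a unique line through the origin, which pinpoints a unique $D$.

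I do not anticipate any serious obstacle --- the theorem is essentially a repackaging of the definition of ``distinguishable observation.'' The only bookkeeping point is the empty-set case: $S_\emptyset = 0$ is formally collinear with every vector, so the hypothesis should be read as applying to distinct non-empty subsets, with the empty case handled separately by the trivial $(0,0)$ observation noted above. It is also worth remarking that the sign of $\epsilon$ is immaterial here, because two non-empty sums of non-negative non-zero vectors can only be collinear via a positive scalar; hence collinearity and positive proportionality coincide in this setting, and no additional care for negative $\epsilon$ is required.
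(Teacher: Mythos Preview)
Your proof is correct and follows the same idea as the paper's: the observation $(a,b)=\epsilon S_D$ determines $D$ precisely when distinct subset sums lie on distinct lines through the origin, which is exactly the non-collinearity condition. Your write-up is more thorough than the paper's one-line argument---you explicitly handle the empty-set case and the sign of $\epsilon$---but the underlying approach is identical.
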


\begin{proof}
Suppose we are trying to differentiate whether the set of mints producing fake coins is $K$ or $J$. Collinearity of $S_K$ and $S_J$ means that the corresponding values of $a/b$ are the same for both sets.
\end{proof}

Now I would like to suggest another direction. Given the limited number of coins $c$ we can request from the mints, what is the maximum number of mints we can test?

Let us see what happens if the limit is 1. By Theorem~\ref{thm:la} all the vectors have to be different, so we are limited to the set (0,1), (1,0), and (1,1). But the sum of the first two vectors is collinear with the third. So we cannot test three mints. This answers Exercise~\ref{ex:3mints}. 

Let us see what happens if we limit the number of coins by 2. Then 8 possible vectors are (0,1), (0,2), (1,0), (1,1), (1,2), (2,0), (2,1), and (2,2). At most one vector from each pair $\{(0,1)$ and $(0,2)\}$, $\{(1,0)$ and $(2,0)\}$, and $\{(1,1)$ and $(2,2)\}$ can be present. So we are limited by 5 vectors: $(x,0)$, $(0,y)$, $(z,z)$, $(1,2)$, and $(2,1)$. Both vectors (1,2) and (2,1) cannot be together with $(z,z)$. 

\begin{exercise}
Show that it is impossible to test 4 mints when $c=2$.
\end{exercise} 

On the other hand, we already know a solution for 3 mints where the number of coins from each mint is limited by 2. It corresponds to vectors: $(0,1)$, $(1,1)$, and $(2,1)$.

The ApSimon's Mints problem seems intractable. The new variation might be easier to calculate and to estimate the bounds. The good news: the result of the calculations will provide bounds for the original ApSimon's Mints problem. Indeed, if we can find a solution for $n$ mints with the maximum number of coins equal to $c$, then there exists a solution for the original problem with a bound of $cn$.

I do not want to leave readers with puzzles that might end up being unyielding, so I suggest the following puzzle. 

\begin{exercise}
Suppose $\epsilon$ is known. Solve the ApSimon's Mints problem in this case: Minimize the total coins requested from mints, $\Sigma_r \max_r(P_r,Q_r)$.
\end{exercise} 

If this exercise is too easy, then generalize it to any number of weighings.

\begin{exercise}
Suppose $\epsilon$ is known. Solve the ApSimon's Mints problem for any given number of weighings: Minimize the total coins requested from mints, $\Sigma_r \max_r(P_r,Q_r)$.
\end{exercise}


\begin{thebibliography}{9}

\bibitem{A} H.~ApSimon, \textit{Mathematical Byways in Ayling, Beeling and Ceiling}, Oxford University Press (1991).

\bibitem{GN} R.~K.~Guy and R.~J.~Nowakowsky, ApSimon's Mints Problem, \textit{Amer. Math. Monthly}, 101 (1994) 358--359

\bibitem{OEIS} OEIS Foundation Inc. (2011), The On-Line Encyclopedia of Integer Sequences, \url{http://oeis.org}.



\end{thebibliography}
\end{document}